\newtheorem{theorem}{Theorem}
\newtheorem{lemma}[theorem]{Lemma}
\newtheorem*{claim}{Claim}
\newtheorem{question}{Question}
\newtheorem*{example}{Example}
\begin{document}

\title{Biclique Covers and Partitions}
\author{Trevor Pinto\thanks{School of Mathematical Sciences, Queen Mary University of
London, London E1 4NS, UK (\tt{tpinto@maths.qmul.ac.uk})}}
\maketitle

\begin{abstract} 

The \emph{biclique cover number} (resp. \emph{biclique partition number}) of a graph $G$, $\mathrm{bc}(G$) (resp. $\mathrm{bp}(G)$), is the least number of bicliques---complete bipartite subgraphs---that are needed to cover (resp. partition) the edges of $G$.

The \emph{local biclique cover number} (resp. \emph{local biclique partition number})  of a graph $G$, $\mathrm{lbc}(G$) (resp. $\mathrm{lbp}(G)$), is the least $r$ such that there is a cover (resp. partition) of the edges of $G$ by bicliques with no vertex in more than $r$ of these bicliques.

We show that $\mathrm{bp}(G)$ may be bounded in terms of $\mathrm{bc}(G)$, in particular, $\mathrm{bp}(G)\leq \frac{1}{2}(3^\mathrm{bc(G)}-1)$. However, the analogous result does not hold for the local measures. Indeed, in our main result, we show that $\mathrm{lbp}(G)$ can be arbitrarily large, even for graphs with $\mathrm{lbc}(G)=2$. For such graphs, $G$, we try to bound $\mathrm{lbp}(G)$ in terms of additional information about biclique covers of $G$. We both answer and leave open questions related to this.

There is a well known link between biclique covers and subcube intersection graphs. We consider the problem of finding the least $r(n)$ for which every graph on $n$ vertices can be represented as a subcube intersection graph in which every subcube has dimension $r$. We reduce this problem to the much studied question of finding the least $d(n)$ such that every graph on $n$ vertices is the intersection graph of subcubes of a $d$-dimensional cube.

\end{abstract}

\section{Introduction} 

\subsection{Biclique covers and partitions}

A \emph{biclique cover} of a (simple) graph, $G$, is a collection of bicliques (complete bipartite subgraphs) of $G$, the union of whose edges is $E(G)$. A related notion is that of a \emph{biclique partition}: a collection of bicliques of $G$, whose edges partition the edges of $G$.  We say that a biclique  cover (resp. partition) is a $k$\emph{-cover} (resp. $k$\emph{-partition}) if it contains at most $k$ bicliques.   The \emph{biclique cover number} of $G$, $\mathrm{bc}(G)$, is the least $k$ for which there exists a $k$-cover of $G$. Analogously, the \emph{biclique partition number} of $G$, $\mathrm{bp}(G)$, is the least $k$ for which there exists a $k$-partition of $G$. The biclique cover number and biclique partition number have been studied extensively, see for instance \cite{mubayi} and \cite{juknakulikov}, motivated by applications to many other areas such as combinatorial geometry \cite{alon}, communication complexity \cite{comcom}, network addressing \cite{grahampollak} and even immunology \cite{nau}. 


One of the early results on the biclique partition number is the Graham-Pollak Theorem \cite{grahampollak}---see \cite{tverberg} for an elegant proof---which concerns $K_n$, the complete graph on $n$ vertices. More specifically, it states that $\mathrm{bp}(K_n)=n-1$. In contrast to this, it is easy to show that $\mathrm{bc}(K_n)=\lceil \log  n \rceil$. (Here and elsewhere, $\log$  refers to $\log_2$). These results demonstrate that the trivial inequality $\mathrm{bc}(G)\leq \mathrm{bp}(G)$ may be quite loose: $\mathrm{bp}(K_n) \geq  2^{\mathrm{bc}(K_n)-1}-1$.


In Section 3, we investigate how large $\mathrm{bp}$ can be in terms of $\mathrm{bc}$. Indeed, we prove that if $\mathrm{bc}(G)=t$, $\mathrm{bp}(G) \leq (3^t-1)/2$ and exhibit a graph showing that this is tight.

We call a biclique  cover (resp. partition) \emph{$r$-local} if every vertex is in at most $r$ of the bicliques involved in the cover (resp. partition) of $G$. The \emph{local biclique cover number}, $\mathrm{lbc}(G)$, is the least $r$ such that there exists an $r$-local cover of the graph. Similarly, the \emph{local biclique partition number}, $\mathrm{lbp}(G)$, is the least $r$ such that there exists an $r$-local partition of the graph. Variants of $\mathrm{lbp}$ and $\mathrm{lbc}$ have long been studied, starting in 1967 with Katona and Szemer\'edi \cite{katonaszem} in connection with diameter 2 graphs with few edges. An easy corollary of their result is:

\[\text{lbc}(K_n)=\mathrm{lbp}(K_n)=\lceil \log n \rceil,\] 
which was also shown in full by Dong and Liu \cite{dongliu}. This result is perhaps surprising given that $\mathrm{bc}(K_n)$ and $\mathrm{bp}(K_n)$ differ so vastly.



One of the key results in the theory of the local biclique cover/ partition numbers, is that of Lupanov \cite{lupanov} in 1956, stating that for all $n$, there exists a graph $G$ on $n$ vertices with $\mathrm{lbc}(G)\geq \frac{c_1 n}{\log n}$, where $c_1$ is a positive constant. Indeed his result is slightly stronger than stated---he actually showed the same bound holds if $\mathrm{lbc}(G)$ is replaced by $w(G)$, the minimum over all covers, $\kappa$, of $G$ of the average number of bicliques in $\kappa$ to which each vertex belongs. This result was later reproved by Chung, Erd\H{o}s and Spencer \cite{chungetal} in 1983 and Tuza \cite{tuza} in 1984, among others.  

Each of the above papers also included corresponding upper bounds for $w(G)$, but it was not until Erd\H{o}s and Pyber \cite{erdospyber} in 1997 that a similar upper bound for $\mathrm{lbp}$, or $\mathrm{lbc}$, was proved. More formally, they showed that for all $n$, $\mathrm{lbp}(G)\leq \frac{c_2 n }{\log{n}}$, for some constant $c_2$. Combined with Lupanov's result above, this means that if $\mathrm{lbc}(G)$ is large compared to the number of vertices of $G$, then $\mathrm{lbp}(G)$ is not too much larger.

This is one motivation for seeking to bound $\mathrm{lbp}$ in terms of $\mathrm{lbc}$, hoping for a similar result to our previously mentioned bound for $\mathrm{bc}$ in terms of $\mathrm{bp}$. Perhaps surprisingly, however, we find that no such bound is possible. More specifically, in Section 4 we exhibit, for all $k\geq 2$, a graph $G$ with $\mathrm{lbc}(G)=2$, but $\mathrm{lbp}(G)\geq k$.

Given this, it is natural to ask `how fast' can $\mathrm{lbp}(G)$ tend to infinity if $\mathrm{lbc}(G)=2$. One way of formalizing this question to ask `if G has a 2-local $m$-cover, how large can $\mathrm{lbp}(G)$ be?' While we are able to answer this, up to a constant factor, in Section 5, we leave open the equally natural question of `if $\mathrm{lbc}(G)=2$ and $\mathrm{bc}(G)=m$, how large can $\mathrm{lbp}$ be?' A second question that we leave open is whether, loosely speaking, all graphs have a single cover that is close to optimal for both $\mathrm{bc}$ and $\mathrm{lbc}$.


\subsection{Subcube Intersection Graph Representations}

The biclique cover number discussed above has a natural interpretation in the setting of hypercube intersection graphs. 

The $d$-\emph{dimensional hypercube}, $Q_d$, is the graph with vertex set $\{0,1\}^d$ and with two vertices connected by an edge if and only if the vectors representing them differ in exactly one coordinate. A subset, $S$, of the vertices of the hypercube, $Q_d$, is called a subcube if it induces a graph isomorphic to a hypercube of some dimension. In other words, there is some set  $J\subseteq [d]=\{1,2,3,...,d\}$, and constants $a_j\in \{0,1\}$ for each $j\in J$ such that $(x_1,...,x_n) \in$ $S$ if and only if for all $j\in J$, $x_j=a_j$. \emph{Fixed coordinates} are those coordinates in $J$, whereas \emph{free coordinates} are coordinates that are not fixed.

A subcube is said to have dimension $r$ if it has $r$ free coordinates. We shall write subcubes as vectors in $\{0,1,*\}^d$, where the asterisks denote free coordinates and the 0's and 1's denote fixed coordinates, $j$, with $a_j=0$ or 1 respectively.

An \emph{intersection graph} of a family of subsets of some groundset has one vertex for each of the sets, and an edge between two vertices if and only if the corresponding sets intersect.  See \cite{jukna} or \cite{mckee} for more on intersection graphs for various set families. In this paper, we are interested in intersection graphs where all sets in the family are subcubes of a hypercube. Such graphs are called \emph{subcube intersection graphs}. Note that two subcubes intersect if and only if they agree on all coordinates where both are fixed. See Johnson and Markstr\"om \cite{johnsonmarkstrom} for some background. 

Let $I(n,d)$ be the set of all graphs on $n$ vertices that are the intersection graph of some family of subcubes of a $d$-dimensional hypercube. 
It is easy to see that  $G\in I(n,d)$ if and only if its complement has a $d$-cover, as was apparently first pointed out by Fishburn and Hammer \cite{fishburn}, and also noted in \cite{johnsonmarkstrom}. Indeed, a representation of $G\in I(n,d)$ assigns a vector in $\{0,1,*\}^d$ to each vertex. Generate a biclique for each of the $d$ coordinates, having as one class all the vertices with a zero in this coordinate, and as the other class all vertices with a one in this coordinate. Then, the union of these bicliques has an edge exactly where $G$ does not have an edge. This produces a $d$-cover of $G^c$ and the converse is similar. 

If $G$ is a graph on $n$ vertices, we write $\rho(G)$ for the smallest $d$ such that $G$ $\in$ $I(n,d)$---in other words, $\rho(G)$ is the smallest $d$ such that $G$ may be represented as an intersection graph of a family of subcubes of $Q_d$. By the previous paragraph, $\rho(G)=\mathrm{bc}(G^c)$. We also write  $\rho(n)=\max\{\rho(G): |G| =n\}$, where $|G|$ denotes the number of vertices of $G$.

Similarly, we define $\tau(G)$ to be the smallest $r$ such that $G$ may be represented as an intersection graph of a family of $r$ dimensional subcubes of some hypercube. We also let $\tau(n)=\max\{\tau(G): |G|=n\}$. Using the relationship between subcube intersection graphs and biclique coverings, we may see that $\tau(G)$ is  the least $r$ such that $G^c$ has a biclique cover where every vertex lies in all but $r$ of the bicliques in the cover. (Note in this interpretation, bicliques with one empty class are allowed). This is bears a resemblance to, but is distinct from $\mathrm{lbc}(G^c)$---the least $r$ such that $G^c$ has a cover with every biclique in no more than $r$ bicliques of the cover.

Many authors have placed various bounds on $\rho$---see for instance \cite{chung}, \cite{erdos}, \cite{rodl} and \cite{tuza}.  The best known bounds are: 
 \[n-c \log n \leq \rho(n) \leq n- \lfloor \log n\rfloor+1, \]
where $c$ is some positive constant. The upper bound is due to Tuza \cite{tuza} and the probabilistically proved lower bound to R\"odl and Ruci\'nski \cite{rodl}. The question of obtaining similar bounds for $\tau(n)$ was posed by Johnson and Markstr\"om \cite{johnsonmarkstrom}. In Section 2, we shall prove a very close relationship between $\tau(n)$ and $\rho(n)$ and thus we obtain bounds on $\tau$ similar to those on $\rho$.

We remark briefly that the other biclique covering/partitioning measures may be interpreted in this way. For instance, $\mathrm{lbc}(G^c)$ is the least $r$ such that $G$ can be represented as the intersection graph of a family of codimension $r$ subcubes of some hypercube. Equally, $\mathrm{bp}(G^c)$ is the least $d$ such that $G$ can be represented in $I(n,d)$ such that the subcubes of non-adjacent vertices differ in only one coordinate. 

\section{Relationship between $\tau$ and $\rho$} \label{subcubes}

We shall prove the upper bounds and lower bounds on $\tau$ separately.

\begin{lemma}

For any graph $G$, $\tau(G) \leq \rho(G)$, and hence $\tau(n) \leq \rho(n)$.

\end{lemma}

\begin{proof}
Let $G$ be a graph on $n$ vertices with $\rho(G)=d$. This means that there are subcubes $A_1, \dots , A_n$ of $Q_d$ whose intersection graph is $G$. Let $M$ (resp. $m$) be the maximum (resp. minimum) dimension of these subcubes. For all $i$, replace $A_i$ by $A'_i$, a subcube of $Q_{d+M-m}$ by appending 0's and $*$'s to the vector of $A_i$. We ensure that we add as many $*$'s as needed so the resultant vector has precisely $M$ $*$'s and as many 0's as needed to give the vector length $(d+M-m)$. The intersection graph of the $A'_i$ is $G$. This shows that $G$ may be represented as an intersection graph of a family of subcubes of dimension M. Since $M\leq d$, the result follows.
\end{proof}

\begin{lemma}
$\tau(n+1) \geq \rho(n)$.
\end{lemma}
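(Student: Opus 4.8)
The goal is to show $\tau(n+1) \geq \rho(n)$, i.e. that there is a graph on $n+1$ vertices whose dimension-constrained subcube representation requires dimension at least $\rho(n)$. The natural strategy is to take a graph $G$ on $n$ vertices that witnesses $\rho(G) = \rho(n)$, and build from it a graph $G^+$ on $n+1$ vertices by adding a single carefully chosen vertex, so that any representation of $G^+$ by $r$-dimensional subcubes yields a representation of $G$ by subcubes of a $\rho(G)$-dimensional cube (hence $r \geq \rho(n)$). So I would let $G$ achieve the maximum, set $G^+ = G + v$ where $v$ is a new vertex, and I would make $v$ \emph{universal} (adjacent to every vertex of $G$) — the point of a universal vertex is that its subcube must intersect every other subcube, so it behaves like an ``almost free'' subcube and can be used to pin down the ambient structure.

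The key step is the following reduction: suppose $G^+$ is represented as the intersection graph of $r$-dimensional subcubes $A_v, A_1, \dots, A_n$ of some hypercube $Q_D$. Since $v$ is universal, $A_v$ intersects every $A_i$. The idea is to ``project'' or ``restrict'' attention to the coordinates that $A_v$ leaves free: on the $r$ free coordinates of $A_v$, each $A_i$ restricts to some subcube (of dimension at most $r$), and on the fixed coordinates of $A_v$, every $A_i$ must agree with $A_v$ (else it would miss $A_v$), so those coordinates carry no information distinguishing the $A_i$'s among themselves as far as mutual intersection goes — wait, that last claim needs care, since two $A_i$'s could disagree on a coordinate where $A_v$ is fixed only if at least one of them is free there. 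Let me restructure: restrict each $A_i$ to the $r$ coordinates where $A_v$ is free, obtaining subcubes $A_i'$ of $Q_r$. I claim $A_i' \cap A_j' \neq \emptyset$ iff $A_i \cap A_j \neq \emptyset$. The forward direction is immediate. For the reverse, I need that if $A_i$ and $A_j$ agree on all of $A_v$'s free coordinates then they intersect globally; this holds because on each coordinate where $A_v$ is fixed, any $A_i$ meeting $A_v$ is either free there or fixed to the same value as $A_v$, so on such coordinates $A_i$ and $A_j$ cannot have conflicting fixed values. Hence the $A_i'$ form a subcube representation of $G$ inside $Q_r$, giving $r \geq \rho(G) = \rho(n)$, and since the representation of $G^+$ was arbitrary, $\tau(G^+) \geq \rho(n)$, so $\tau(n+1) \geq \rho(n)$.

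The main obstacle is precisely verifying the claim $A_i' \cap A_j' \neq \emptyset \iff A_i \cap A_j \neq \emptyset$ — this is where universality of $v$ is used, and one must check the coordinate-by-coordinate case analysis (coordinates where $A_v$ is free vs. fixed; and in the fixed case, whether $A_i$, $A_j$ are free or fixed) goes through cleanly, in particular that no conflict can be hidden in the coordinates outside $A_v$'s free set. A secondary point to be careful about is that $A_i'$ really is a subcube of the $r$-dimensional cube $Q_r$ (it is, being a restriction of a subcube to a coordinate subset) and that this is exactly the object $\rho(G)$ is defined in terms of. I should also double-check the degenerate cases — e.g. if $G$ has isolated structure or if some $A_i$ is forced to equal $A_v$ — but these do not cause problems since we only need a valid representation, not a minimal one, and empty-class bicliques are permitted in this setting as noted in the excerpt.
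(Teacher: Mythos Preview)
Your proposal is correct and follows essentially the same route as the paper: take a graph $G$ attaining $\rho(n)$, adjoin a universal vertex to form $G^+$, represent $G^+$ by $r$-dimensional subcubes, and restrict the other subcubes to the $r$ free coordinates of the universal vertex's subcube to recover a representation of $G$ inside $Q_r$. The only quibble is that what you call the ``forward'' direction of your iff ($A_i'\cap A_j'\neq\emptyset \Rightarrow A_i\cap A_j\neq\emptyset$) is in fact the non-trivial one requiring the universality argument, and what you call the ``reverse'' is the trivial one; you have the arguments themselves right, just with the labels swapped.
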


\begin{proof}
Let $G$ be a graph with vertices $v_1, \dots, v_n$ and with $\rho(G)=\rho(n)$. Now form $G'$ from $G$ by adding a single vertex, $v_{n+1}$, adjacent to all vertices of $G$. Let $A_1,\dots,A_n, A_{n+1}$ be $r$-dimensional subcubes of $Q_d$ such that $G'$ is the intersection graph of the $A_i$ (the vertex $v_i$ being represented by $A_i$) and such that $r$ is equal to $\tau(G')$. Without loss of generality, let $A_{n+1}$ be free in the first $r$ coordinates, and hence fixed in the other $d-r$ coordinates. Where $A_{n+1}$ is fixed, all the other $A_i$ must be either free (have an asterisk in that coordinate) or have the same fixed value as $A_{n+1}$, as $v_{n+1}$ is adjacent to all the other vertices. Thus restricting the first $n$ subcubes to the first $r$ coordinates does not change which pairs of subcubes intersect. So the intersection graph of these restricted subcubes is $G$, implying that $G \in I(n,r)$.

Therefore, using the definition of $\rho$,  $\tau(n+1)\geq \tau(G') =r \geq \rho(G)=\rho(n)$.

\end{proof}

Combining these two lemmas with the bounds on $\rho$ gives:

\begin{theorem}
 There is some absolute constant, $c$, such that for all $n$, \[n- c \log n \leq \tau(n) \leq n- \lfloor{\log n}\rfloor+1.\] 
\end{theorem}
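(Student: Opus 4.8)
The plan is to combine the two lemmas just proved with the known bounds on $\rho(n)$ quoted in the introduction, namely $n - c'\log n \leq \rho(n) \leq n - \lfloor \log n\rfloor + 1$ for some absolute constant $c'$. The upper bound on $\tau(n)$ is immediate: Lemma 1 gives $\tau(n) \leq \rho(n)$, and plugging in the R\"odl--Ruci\'nski--Tuza upper bound on $\rho(n)$ yields $\tau(n) \leq n - \lfloor \log n\rfloor + 1$ with no loss at all.

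For the lower bound, I would start from Lemma 2, which says $\tau(n+1) \geq \rho(n)$, equivalently $\tau(n) \geq \rho(n-1)$ for $n \geq 2$. Applying the lower bound on $\rho$ gives $\tau(n) \geq \rho(n-1) \geq (n-1) - c'\log(n-1)$. The only remaining work is the routine estimate that $(n-1) - c'\log(n-1) \geq n - c\log n$ for a suitable absolute constant $c$ and all $n$ (say $n \geq 2$, absorbing any small cases by enlarging $c$): since $\log(n-1) \leq \log n$, it suffices to have $c\log n \geq c'\log n + 1$, which holds for all $n \geq 2$ once $c \geq c' + 1/\log 2 = c' + 1$. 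So taking $c = c' + 1$ works, and one should also note the trivial degenerate cases (e.g.\ $n=1$) separately or fold them into the constant.

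The main ``obstacle'' here is essentially bookkeeping rather than mathematics: one must make sure the off-by-one shift coming from Lemma 2 (which relates $\tau(n+1)$ to $\rho(n)$, not $\tau(n)$ to $\rho(n)$) is correctly absorbed into the additive logarithmic error term, and that the final constant $c$ is genuinely absolute and independent of $n$. Since replacing $n$ by $n-1$ changes the leading term by only $1$ and the logarithmic term by less than $1$, this is harmless. I would therefore present the proof as: (i) quote the bounds on $\rho(n)$; (ii) apply Lemma 1 for the upper bound verbatim; (iii) apply Lemma 2 in the form $\tau(n) \geq \rho(n-1)$ and then the lower bound on $\rho$; (iv) do the one-line constant chase to rewrite $(n-1) - c'\log(n-1)$ as $n - c\log n$.
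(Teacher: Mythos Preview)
Your proposal is correct and is exactly the approach taken in the paper: the theorem is stated there as an immediate consequence of the two lemmas together with the quoted bounds on $\rho(n)$, with no further argument given. If anything, you have supplied more detail than the paper does, since the paper leaves the off-by-one absorption into the constant implicit.
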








\section{Bounding $\mathrm{bp}$ in terms of $\mathrm{bc}$} \label{bounding bp}

As the earlier example of $K_n$ shows, $\mathrm{bp}$ can be as large as exponential in $\mathrm{bc}$. Here, we show that $\mathrm{bp}$ grows no faster than exponentially in $\mathrm{bc}$, and calculate the best upper bound exactly.
We do this by reducing the problem to proving the upper bound for a single graph. In the second theorem  of this section, we use ideas from Tverberg's proof \cite{tverberg} of the Graham-Pollak Theorem to calculate $\mathrm{bp}$ of this graph and show that the previous bound is tight.

\begin{theorem} \label{bpvsbc}
If $\mathrm{bc}(G)=m$, then $\mathrm{bp}(G)\leq \frac{1}{2}(3^m-1).$ 
\end{theorem}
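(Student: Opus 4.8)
The plan is to fix a minimum cover of $G$ and build an explicit partition out of it. Let $B_1,\dots,B_m$ be a biclique cover of $G$, where $B_i$ has parts $X_i$ and $Y_i$ (pick a bipartition of each $B_i$ once and for all), and assign to every vertex $v$ a \emph{profile} $p(v)\in\{0,1,2\}^m$ by setting $p(v)_i=1$ if $v\in X_i$, $p(v)_i=2$ if $v\in Y_i$, and $p(v)_i=0$ otherwise. Since the $B_i$ form a cover, $uv\in E(G)$ if and only if $\{p(u)_i,p(v)_i\}=\{1,2\}$ for some $i$; call such an $i$ an \emph{active} coordinate of $uv$. First I would split $E(G)$ according to the smallest active coordinate: let $E_i$ be the set of edges whose smallest active coordinate is $i$, so that $E_1,\dots,E_m$ partition $E(G)$. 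It then suffices to show that each $E_i$ is a disjoint union of at most $3^{i-1}$ bicliques of $G$, since $\sum_{i=1}^{m}3^{i-1}=\frac{1}{2}(3^m-1)$.

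The core step is decomposing a single class $E_i$. Every edge of $E_i$ joins a vertex with $i$-th profile entry $1$ to a vertex with $i$-th entry $2$, while on each earlier coordinate $j<i$ the two entries form one of the seven pairs in $\{0,1,2\}^2\setminus\{(1,2),(2,1)\}$. I would choose fixed set-valued maps $L,M\colon\{0,1,2\}\to 2^{\{0,1,2\}}$ such that (a) for each $c$ the rectangle $L(c)\times M(c)$ contains neither $(1,2)$ nor $(2,1)$, and (b) the three rectangles $L(c)\times M(c)$ partition those seven pairs. One explicit choice is $L(c)=\{c\}$ for every $c$ together with $M(0)=\{0,1,2\}$, $M(1)=\{0,1\}$, $M(2)=\{0,2\}$; checking (a) and (b) is a short finite verification. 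For each pattern $w\in\{0,1,2\}^{i-1}$ this yields a biclique of $G$ with parts
\[
U_w=\{\,u:\ p(u)_i=1,\ p(u)_j\in L(w_j)\text{ for all }j<i\,\},\qquad V_w=\{\,v:\ p(v)_i=2,\ p(v)_j\in M(w_j)\text{ for all }j<i\,\}.
\]
Property (a) guarantees that every pair in $U_w\times V_w$ is an edge of $G$ whose smallest active coordinate is exactly $i$ (hence lies in $E_i$), and property (b) guarantees that every edge of $E_i$, oriented from its ``$1$''-endpoint to its ``$2$''-endpoint on coordinate $i$, lies in exactly one $U_w\times V_w$. Discarding empty parts leaves at most $3^{i-1}$ bicliques partitioning $E_i$, as wanted.

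The only genuine obstacle is locating the maps $L$ and $M$ — equivalently, carving the seven admissible coordinate-patterns into three combinatorial rectangles each avoiding $(1,2)$ and $(2,1)$; once that is done the rest is bookkeeping. I would also remark that this construction is nothing but a biclique partition of the ``universal'' profile graph on vertex set $\{0,1,2\}^m$ (two profiles $a,b$ adjacent exactly when $\{a_i,b_i\}=\{1,2\}$ for some $i$), of which every $G$ with $\mathrm{bc}(G)\le m$ is a blow-up of an induced subgraph; so the argument can equally be phrased as the promised reduction to a single graph, with the matching lower bound for that graph left to the next theorem.
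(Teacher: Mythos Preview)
Your proof is correct and follows essentially the same approach as the paper: encode vertices by their profiles in $\{0,1,2\}^m$ (the paper uses $\{0,1,*\}^m$), reduce to the universal profile graph, and partition its edges according to their least active coordinate, splitting each class $E_i$ into $3^{i-1}$ bicliques via a three-rectangle decomposition of the seven inactive coordinate patterns. The only cosmetic difference is that the paper packages this as an induction on $m$ (adding one biclique for the new coordinate and tripling the rest, with the rectangle choice $\{*,0\}\times\{*,0\}$, $\{*,1\}\times\{1\}$, $\{1\}\times\{*\}$), whereas you unroll the recursion and write the partition down directly with the alternative rectangle choice $L(c)=\{c\}$.
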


\begin{proof}

Fix an $m$-cover of $G$, $\kappa=\{B_1, \dots, B_m\}$, and for each biclique $B_i$ we label the vertex classes  as class 0 and class 1. We now represent each vertex of $G$ as an element of $\{0,1,*\}^m$, based on its membership of elements of $\kappa$. If $v\in V(G)$, define $\tilde{v}\in \{0,1,*\}^m$ by:
\[\tilde{v} _i=\begin{cases} 0 & $if $v$ is in  class  0 of $B_i,\\
                      1 & $if $v$ is in class 1 of $B_i,\\
                      * & $otherwise$. \end{cases} \]

If $\tilde{u}=\tilde{v}$ then $u$ and $v$ have the same neighbours. Therefore, if $u \neq v$, this would mean that $\mathrm{bp}(G)=\mathrm{bp}(G-u)$, so we could replace $G$ with $G-u$ and thus we may assume vertices all have different representations. Using the identification $v \mapsto \tilde{v}$, $V(G)\subseteq\{0,1,*\}^m$.

Since the biclique partition number of a graph is at least that of each of its induced subgraphs, we may assume that $V(G)= \{0,1,*\}^m$. Note that there is an edge between two vertices $u$ and $v$ if and only if there is some $i$ for which $\{ u_i, v_i\}=\{0,1\}$. In other words, this is the complement of the intersection graph of all subcubes of an $m$-dimensional cube. We write $G_m$ for this graph.

Before proceeding by induction on $m$, we introduce some notation.  We write $(\alpha,\beta)$ for the biclique with vertex classes $\alpha$ and $\beta$. If $\alpha \subseteq \{0,1\}^m$ and $x,y \in \{0,1,*\}$, we define the following subsets of $\{0,1,*\}^{m+1}$:

\[ x\alpha:=\{(x,v_1, v_2, \dots, v_m): (v_1, \dots, v_m) \in \alpha\}
\quad \begin{smallmatrix} x \\ y \end{smallmatrix} \alpha :=x\alpha \cup y\alpha
\]

Let $\pi_m$ be an optimal partition of $G_m$---i.e. $\pi _m$ is a $\mathrm{bc}(G)$-cover of $G$.  Add $B_1$ to $\pi_{m+1}$. For every $(\alpha, \beta)\in \pi_m$, place the following three bicliques in $\pi_{m+1}$:
($\begin{smallmatrix} *\\ 0 \end{smallmatrix} \alpha, \begin{smallmatrix} *\\ 0 \end{smallmatrix} \beta$), 
($\begin{smallmatrix} *\\ 1 \end{smallmatrix} \alpha, 1 \beta$) and
($1  \alpha,  *  \beta$). 

\begin{claim}
$\pi_{m+1}$ is a partition of $G_{m+1}$ and contains $3 \mathrm{bp}(G_m)+1$ bicliques.
\end{claim}

\begin{proof}[Proof of claim]
Let $u=(u_1, \dots, u_{m+1})$ and $v=(v_1,\dots, v_{m+1})$ be two vertices that are not joined by an edge of $G_{m+1}$. This means there is no $i$ such that $\{u_i, v_i\}=\{0,1\}$ so $uv$ is not an edge of $B_1$. Moreover, if we define $u'=\{u_2, \dots, u_{m+1}\}$ and $v'=\{v_2, \dots v_{m+1}\}$, i.e. $u$ and $v$ with the first coordinate removed, then $u'v'$ is not an edge of $G_m$. Hence $u'v'$ is not an edge in any biclique of $\pi_m$; this implies $uv$ is not an edge of any biclique of $\pi_{m+1}$.

Conversely, let $e$ be an edge of $G_{m+1}$. If $e$ is also an edge of $B_1$, it is contained in no other bicliques of $\pi_{m+1}$. If $e$ is not an edge of $B_1$, then removing the first coordinate from each of its endpoints forms an edge $e'$ of $G_m$. As $\pi_m$ is a partition of $G_m$, $e'$ lies in exactly one $(\alpha, \beta) \in \pi_m$. Then, by inspection, $e$ must lie in exactly one of the corresponding bicliques in $\pi_{m+1}$.
\end{proof}
As $\mathrm{bp}(G_1)=1$, our proof is concluded by applying the above claim inductively.

\end{proof}

\begin{theorem}
There is a graph G with $\mathrm{bc}(G)=m$, but $\mathrm{bp}(G)=\frac{1}{2}(3^m-1)$.
\end{theorem}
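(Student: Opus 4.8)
The plan is to take $G = G_m$, the graph on vertex set $\{0,1,*\}^m$ with an edge between $u$ and $v$ precisely when $\{u_i, v_i\} = \{0,1\}$ for some coordinate $i$, which is exactly the graph analyzed in Theorem~\ref{bpvsbc}. First I would check that $\mathrm{bc}(G_m) = m$: the $m$ natural bicliques (one per coordinate, splitting vertices by their value in that coordinate) clearly cover $E(G_m)$, giving $\mathrm{bc}(G_m) \le m$; for the lower bound, note that $G_m$ contains an induced $K_{2^m}$ (the subcube $\{0,1\}^m$ of vertices with no free coordinates, all pairwise adjacent), so $\mathrm{bc}(G_m) \ge \mathrm{bc}(K_{2^m}) = \lceil \log 2^m \rceil = m$. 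The construction in Theorem~\ref{bpvsbc} already gives $\mathrm{bp}(G_m) \le \frac{1}{2}(3^m - 1)$, so the entire content of this theorem is the matching lower bound $\mathrm{bp}(G_m) \ge \frac{1}{2}(3^m-1)$.

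For the lower bound I would imitate Tverberg's linear-algebra proof of Graham--Pollak. Suppose $G_m$ has a biclique partition into bicliques $(\alpha_k, \beta_k)$ for $k = 1, \dots, N$. To each vertex $v \in \{0,1,*\}^m$ associate a real variable $x_v$, and for each biclique assign the bilinear form $L_k = \big(\sum_{v \in \alpha_k} x_v\big)\big(\sum_{v \in \beta_k} x_v\big)$. Because the bicliques partition the edges, $\sum_{k=1}^N L_k = \sum_{uv \in E(G_m)} x_u x_v$, i.e.\ $\sum_k L_k$ equals the ``edge form'' of $G_m$. The strategy is then: if $N$ were small, the common zero locus of the $N$ linear forms $\{\sum_{v\in\alpha_k} x_v = 0, \sum_{v \in \beta_k} x_v = 0\}$ — a system of $2N$ linear equations, so a subspace of dimension $\ge 3^m - 2N$ — would force the edge form to vanish on a large subspace; deriving a contradiction with the ``rank'' or definiteness properties of the edge form of $G_m$ on that subspace yields $2N \ge 3^m - (\text{something})$. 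Getting the precise constant $\frac12(3^m-1)$ means the relevant algebraic invariant of the edge quadratic form $q_m(x) = \sum_{uv \in E(G_m)} x_u x_v$ must be computed exactly: I expect that $q_m$, as a symmetric matrix indexed by $\{0,1,*\}^m$, has exactly one zero eigenvalue (equivalently, its kernel is $1$-dimensional — plausibly spanned by a signed combination like $x_v = (-2)^{-\#\{*\text{'s in }v\}}$ or similar), so that a nonsingular direction argument gives $N \ge \frac12(3^m - 1)$.

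The cleanest route to pinning down this invariant is probably a tensor/induction argument: $G_m$ has a natural product structure over the $m$ coordinates, each coordinate contributing a copy of the $3 \times 3$ pattern on $\{0,1,*\}$, and the edge form should factor accordingly, so its signature or corank multiplies (or adds) across coordinates. Concretely, for $m=1$ the three vertices $0,1,*$ have the single edge $0$--$1$, so $q_1$ is the $3\times 3$ matrix with a single off-diagonal pair of $1$'s and zeros elsewhere; its corank is $1$ and $\frac12(3^1-1) = 1 = \mathrm{bp}(G_1)$, matching the base case. I would then show that passing from $m$ to $m+1$ triples the relevant count and adds one, exactly paralleling the recursion $\mathrm{bp}(G_{m+1}) = 3\,\mathrm{bp}(G_m) + 1$ from the upper-bound construction — which is strong circumstantial evidence that the construction is optimal and that an inductive lower bound with the same recursion will close the gap.

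The main obstacle will be making the linear-algebra bookkeeping match the factor $3$ and the $-1$ exactly rather than up to a constant. Tverberg's argument for $K_n$ is slick because the edge form of $K_n$ is $(\sum x_i)^2 - \sum x_i^2$, a manifestly rank-$(n-1)$-off-a-negative-definite-part object; here the edge form of $G_m$ is more intricate (vertices with many free coordinates have large degree and the form is far from a simple rank-one-plus-diagonal shape), so identifying its kernel and the dimension on which it is, say, positive (or negative) definite is the crux. I would handle this by diagonalizing the $3\times 3$ coordinate building block explicitly, taking the $m$-fold tensor product, and reading off the multiplicities of positive, negative and zero eigenvalues of $q_m$; then a standard argument (a biclique partition of size $N$ forces $q_m$ to be writable with at most $N$ ``positive'' and $N$ ``negative'' rank contributions, hence $\mathrm{rank}(q_m) \le 2N$, or more precisely the number of positive eigenvalues is $\le N$) delivers the bound. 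If the tensor computation shows $q_m$ has $\frac12(3^m-1)$ positive eigenvalues (and correspondingly $\frac12(3^m-1)$ negative and one zero, summing to $3^m$), we are done.
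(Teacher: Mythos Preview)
Your approach is essentially the paper's: both use the standard inequality $\mathrm{bp}(G) \ge \tfrac12\,\mathrm{rank}\,A(G)$ and then show $\mathrm{rank}\,A(G_m) = 3^m - 1$. The paper does the rank computation by a block-matrix row-reduction induction on $m$; you propose a tensor/eigenvalue argument instead. One correction is needed: the adjacency matrix $A_m$ itself does \emph{not} tensor-factor over coordinates, so ``the edge form should factor accordingly'' is not literally true. What factors is the compatibility matrix $C_m := J_{3^m} - A_m$ (with $J$ the all-ones matrix), because non-adjacency in $G_m$ is the coordinatewise condition $\{u_i,v_i\}\neq\{0,1\}$ for every $i$; thus $C_m = C_1^{\otimes m}$ where $C_1 = \left(\begin{smallmatrix}1&0&1\\0&1&1\\1&1&1\end{smallmatrix}\right)$. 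Since $\det C_1 = -1$, the tensor power is nonsingular, and subadditivity of rank gives $\mathrm{rank}\,A_m \ge \mathrm{rank}\,C_1^{\otimes m} - \mathrm{rank}\,J_{3^m} = 3^m - 1$; the isolated vertex $(*,\dots,*)$ supplies the matching upper bound. With this fix your route is arguably cleaner than the paper's induction. The detour into eigenvalue signatures is unnecessary---plain rank already gives the sharp bound---and your explicit verification that $\mathrm{bc}(G_m)=m$ via the induced $K_{2^m}$ on $\{0,1\}^m$ is a nice addition the paper leaves implicit.
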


\begin{proof}
For a graph $G$, we write $A(G)$ for the adjacency matrix of the graph, and $\mathrm{rank}(G)$ for the rank of $A(G)$. Suppose $P_1,\dots, P_k$ form a biclique partition of $G$. Then $ A(G)=\sum_{i=1}^k A(P_i)$. Since rank is subadditive and bicliques have rank 2, $\mathrm{rank}(G)\leq 2k$. This implies $\mathrm{bp}(G)\geq \mathrm{rank}(G)/2$.

We conclude the proof by calculating $\mathrm{rank}(G_m)$, where $G_m$ is the same graph as defined above.   We define the following order: $0<1<*$ and use lexicographical order for the product spaces $\{0,1,*\}^m$, for all $m$. 

Note that $A_m=A(G_m)$ has $3^m$ rows (and columns) and the final row (and column) in the above order consists solely of 0's. We shall show by induction on $m$ that the first $3^m-1$ rows of $A_m=A(G_m)$, together with the all 1 vector, form a linearly independent set. This is trivial for $m=1$.

In block matrix notation, and using the convention that $A_0=\begin{pmatrix} 0 \end{pmatrix},$
\[A_m= \begin{pmatrix} A_{m-1} & 1 & A_{m-1} \\ 1 & A_{m-1} & A_{m-1} \\ A_{m-1} & A_{m-1} & A_{m-1} \end{pmatrix} \rightarrow \begin{pmatrix} 0 & 1-A_{m-1} & 0 \\ 1-A_{m-1} & 0 & 0 \\ A_{m-1} & A_{m-1} & A_{m-1} \end{pmatrix}.
 \]

The second matrix  is obtained from the first by simple row operations. We define $A'_m$ as the matrix formed by replacing the final row of this second matrix by a row of only 1's.
\[A'_m= \begin{pmatrix} 0 & 1-A_{m-1} & 0 \\ 1-A_{m-1} & 0 & 0 \\ A'_{m-1} & A'_{m-1} & A'_{m-1} \end{pmatrix}
\]

Then our induction hypothesis is equivalent to the claim that $A'_m$ is of full rank---in other words the linear span of the rows of $A'_m$, $\mathrm{span}(A'_m)$, is $\mathbb{R}^{3^m}$. Since $\mathrm{span}(1-A_{m-1})=\mathrm{span}(A'_{m-1})=\mathbb{R}^{3^{m-1}}$, $\mathrm{span}(A'_m)=\mathbb{R}^m$. Therefore, $\mathrm{rank}(A_m)\geq 3^m-1$ and $\mathrm{bp}(G_m)\geq \frac{1}{2}(3^m-1)$. Combined with Theorem \ref{bpvsbc}, this concludes the proof.

\end{proof}

\section{Relationship between $\mathrm{lbc}$ and $\mathrm{lbp}$} \label{cantboundlbp}

As before, we write $(\alpha, \beta)$ for a biclique with vertex classes $\alpha$ and $\beta.$

In a contrast to the previous section's results, we show that $\mathrm{lbp}$ cannot be bounded by $\mathrm{lbc}$, indeed, even for $\mathrm{lbp}(G)=2$, it is possible for $\mathrm{lbc}(G)$ to be arbitrarily large. More concretely:

\begin{theorem} \label{main}
For all $m\geq 2$, there is a graph $G$ with a 2-local $m$-cover for which $\mathrm{lbp}(G)\geq \frac{1}{2} \log(\frac{m-1}{3})$.
\end{theorem}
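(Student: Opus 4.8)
The plan is to construct, for each $m$, a graph $G_m$ that carries an obvious 2-local $m$-cover by design, and then to argue that any biclique partition of $G_m$ forces some vertex into many bicliques. The natural source of a 2-local cover is a set of bicliques where each vertex belongs to exactly two, so I would look for a graph built from a ``cycle-like'' or ``path-like'' structure on $m$ bicliques: take biclique-pieces $B_1,\dots,B_m$ arranged so that consecutive pieces share a controlled set of vertices and each vertex lies in at most two of them. A clean way to do this is to start from a large complete bipartite graph (or a disjoint union of them) and subdivide or chain it so that covering it efficiently needs all $m$ pieces but each vertex only sees two. The target bound $\tfrac12\log\tfrac{m-1}{3}$ — equivalently $m \geq 3\cdot 4^{\mathrm{lbp}(G)}+1$ — strongly suggests that the relevant combinatorial quantity is doubly exponential in the local partition parameter: if $G$ has an $r$-local partition, one should be able to bound the number of ``essentially different'' pieces needed by something like $4^r$ (each vertex in $r$ bicliques, and the local structure around a vertex has at most $2^r$-ish types, squared for the two endpoints of an edge), so $m$ can be at most on the order of $4^r$, which rearranges to the stated inequality.

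Concretely, here is the route I would take. First, fix the construction: I expect $G_m$ to be (a minor modification of) the complement of $G_{\log m}$-type object, or more likely a graph on a vertex set indexed so that edges come in $m$ natural ``coordinate'' classes, each class being a biclique, each vertex in two classes — think of vertices as edges of an auxiliary path/cycle $P_{m+1}$ and bicliques as the ``stars'' or ``intervals'' of that path, so that the biclique through coordinate $i$ is automatically a complete bipartite graph and every vertex lies in exactly two bicliques. This gives the 2-local $m$-cover for free. Second, take an arbitrary $r$-local biclique partition $\mathcal{P}$ of $G_m$ and analyze it locally: pick a vertex $v$ and look at the $\le r$ bicliques of $\mathcal{P}$ containing $v$; each such biclique restricted to the neighbourhood of $v$ partitions a piece of $N(v)$, and since the true neighbourhoods in $G_m$ come in few types, one can encode each vertex by a short string (length $O(r)$) recording which of its bicliques it lies on which side of. Third, a double-counting / pigeonhole step: if $m$ is too large, two of the $m$ coordinate-bicliques are forced to be ``confused'' by $\mathcal{P}$ in a way that makes $\mathcal{P}$ fail to be a partition (some edge covered twice, or some edge uncovered), contradiction. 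Turning that pigeonhole into the precise constant $3\cdot 4^r+1$ is where the bookkeeping lives.

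The main obstacle, I expect, is the middle step — extracting from an arbitrary $r$-local \emph{partition} a rigidity statement strong enough to bound $m$. Covers are flexible (edges may be covered repeatedly), but partitions are rigid: every edge lies in exactly one biclique, and this is what must be played against the 2-local cover structure. The delicate point is that a partition biclique of $G_m$ need not respect the coordinate classes at all — it can cut across many of them — so I would need a lemma saying that, because each vertex is in only $r$ partition-bicliques, the partition cannot ``resolve'' more than roughly $4^r$ coordinates without creating an inconsistency. I anticipate this lemma is proved by assigning to each vertex $v$ the pair consisting of (the multiset of partition-bicliques at $v$, together with the side of each), observing there are at most $2^r$ such ``local types'' per vertex hence at most $4^r$ types of edge, and then showing two coordinate-bicliques of the same edge-type cannot be separated by the partition. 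The rest — the explicit $G_m$ and the final arithmetic $\mathrm{lbp}(G_m)\ge \tfrac12\log\tfrac{m-1}{3}$ — should then be routine.
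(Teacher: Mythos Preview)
Your proposal has a genuine gap in the lower-bound argument. The pigeonhole you sketch --- assign to each vertex the multiset of partition-bicliques containing it together with the side in each, and count at most $2^r$ ``local types'' --- does not work as stated: an $r$-local partition may use arbitrarily many bicliques in total, so different vertices can lie in entirely disjoint $r$-element sets of partition-bicliques, and there is no global bound of $2^r$ (or any finite number) on the number of vertex types. Consequently the step ``at most $4^r$ types of edge, so two coordinate-bicliques of the same edge-type cannot be separated'' has no content: you have not produced any quantity that is simultaneously bounded by $4^r$ and forced to be at least $m$. Something more structural is needed to relate an arbitrary partition back to the $m$ covering bicliques.

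The paper's route is quite different from a type-count. It takes the explicit graph $G_m$ on vectors in $\{0,1,*\}^m$ with exactly two non-$*$ coordinates, so that the covering bicliques $B_1,\dots,B_m$ are the coordinate bicliques and every pair $B_i,B_j$ shares exactly two edges. The key step is a structural lemma: if a partition-biclique $A$ contains an edge shared by $B_i$ and $B_j$, then the restriction of $A$ to at least one of $B_i,B_j$ is a star (or both restrictions are $K_{2,2}$). A colouring-and-pigeonhole argument on the $m(m-1)$ shared edges then locates one $B_i$ in which at least $(m-1)/3$ of the shared edges sit in partition-bicliques that restrict to stars; restricting the partition to this $B_i$ and deleting those shared edges yields an $r$-local partition of a crown graph $H_t$ with $t\ge (m-1)/3$. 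Finally one invokes $\mathrm{lbc}(H_t)\ge \tfrac12\lceil\log t\rceil$, which follows by identifying matched pairs to get a cover of $K_t$ and using $\mathrm{lbc}(K_t)=\lceil\log t\rceil$. Your proposal contains neither the restriction lemma nor the reduction to crown graphs, and without some substitute for them the argument does not go through.
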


We explicitly construct graphs satisfying this inequality.
For $m\geq 2$, let $G_m$ have vertex set 
\[V(G_m)=\big\{v \in \{0,1,*\}^m \big| \, v \text{ has exactly $m-2$ $*$'s} \big\}.\]

  We let $uv\in E(G_m)$ if and only if  there is some $i$ for which $\{ u_i, v_i\}=\{0,1\}$, where $u=\{u_1, \dots, u_m\}$ and $v=\{v_1, \dots, v_m\}$. Although we do not use this fact, $G_m$ is the complement of the intersection graph of all codimension 2 subcubes of a cube of dimension $m$. Our definition of $G_m$ may appear strange but it arises from simple reverse engineering of a graph with a 2-local $m$-cover. See the proof of Theorem \ref{upperbound} for more details. 
  
The graph $G_m$ may also be viewed as the union of $m$ bicliques, $B_1, \dots, B_m$, where $B_i$ is the subgraph induced by the vertex set $\{v \in V(G)|\, v_i \neq *\}$. These $B_i$ are known henceforth as \emph{covering bicliques}, as they form a 2-local cover, although not partition, of $G_m$. Thus $\mathrm{lbc}(G_m)=2$. 

A \emph{crown graph} on $2t$ vertices, $H_t$, is $K_{t,t}$ with a perfect matching removed. We may label the vertices of $H_t$ as $u^1,\dots, u^t$ and $v^1, \dots, v^t$;  $u^iv^j$ is an edge if and only if $i\neq j$, and there are no other edges.

We shall show that an $r$-local partition of $G_m$ can be altered to form an $r$-local partition of $H_t$, for $t$ linear in $m$. We then show that the local biclique cover number of crown graphs tends to infinity as the size of the graph does, indeed, $\mathrm{lbc}(H_t)=\Omega(\log t)$. Since $\mathrm{lbp} \geq \mathrm{lbc}$, this suffices to finish the proof.

\begin{proof}[Proof of Theorem \ref{main}]
Let $G_m$ and its subgraphs $B_1, \dots, B_m$ be as above. We say that an edge $uv$ is \emph{shared} if it is an edge in two of the covering bicliques; equivalently there are two $i$ such that $\{u_i, v_i\}=\{0,1\}$.

Let $\pi$ be a partition of $G$---we call bicliques in $\pi$ \emph{partitioning bicliques}. Let $e$ be a shared edge contained in $B_i$ and let $A=(\alpha,\beta)$ be the partitioning biclique containing it. Consider the graph $A\cap B_i$---i.e. the graph with vertex set $V(A)\cap V(B_i)$, and edge set $E(A)\cap E(B_i)$.   It consists of up to two disjoint bicliques---let $A^e_i=(\alpha^e_i, \beta^e_i)$ denote the one containing the edge $e$. We term this the \emph{restriction} of $A$ onto $B_i$ (with respect to the edge $e$). Note that all vertices in $\alpha^e_i$ agree in the $i^{th}$ coordinate, as do all vertices in $\beta^e_i$.


\begin{lemma}
Let $e$ be a shared edge of $B_i$ and $B_j$, and $A=(\alpha, \beta)$ a biclique containing it. Then one of $A^e_i$ or $A^e_j$ is a star, or both $A^e_i$ and $A^e_j$ are $K_{2,2}$.

\end{lemma}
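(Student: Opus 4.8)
The plan is to analyze the structure forced on a single partitioning biclique $A = (\alpha,\beta)$ by the fact that it contains a shared edge $e = xy$, where $x,y \in \{0,1,*\}^m$ each have exactly $m-2$ stars and $\{x_i,y_i\} = \{x_j,y_j\} = \{0,1\}$ for two distinct coordinates $i,j$. The key observation to exploit is that each vertex of $G_m$ is "almost all stars": it has only two non-star coordinates. So a vertex $w$ that is adjacent (in $G_m$) to \emph{both} $x$ and $y$ is very constrained, because to be adjacent to $x$ it must disagree-as-$\{0,1\}$ with $x$ in one of $x$'s two fixed coordinates, and likewise for $y$; with only two fixed coordinates of its own, this forces $w$'s fixed coordinates to lie inside $\{i,j\}$ in most cases.

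First I would set up coordinates: write the two fixed positions of $x$ and of $y$ — since $e$ is shared, both $x$ and $y$ are fixed exactly on $\{i,j\}$, with $x_i \neq y_i$ and $x_j \neq y_j$ (the two fixed coordinates of each endpoint must be precisely $i$ and $j$, since each has only two fixed coordinates and both $i$ and $j$ must be among them). Next I would recall that $A^e_i$ has all of its $\alpha^e_i$-vertices agreeing in coordinate $i$ and all of its $\beta^e_i$-vertices agreeing in coordinate $i$ (and these two values are $0$ and $1$); similarly for $A^e_j$. So $A^e_i$ and $A^e_j$ are each "honest" bicliques whose two sides are separated by the value in coordinate $i$ (resp. $j$). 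The crux is to bound the sizes of the four sides $\alpha^e_i, \beta^e_i, \alpha^e_j, \beta^e_j$. I would argue: a vertex $w \in \alpha^e_i$ is adjacent in $G_m$ to every vertex of $\beta^e_i$, in particular to $y$ (taking $y$ to be the endpoint of $e$ in $\beta^e_i$, say). Adjacency to $y$ means $w$ disagrees-as-$\{0,1\}$ with $y$ in one of $y$'s fixed coordinates, i.e. in $i$ or $j$. If it happens in $i$, fine (that's the defining split of $A^e_i$); if it happens in $j$, then $w$ is fixed at coordinate $j$ with value $\neq y_j = x_j$, i.e. $w_j = x_j$. Running the same argument against $x$ (which lies in $\alpha^e_i$, on $w$'s own side — so this gives no adjacency constraint directly) I instead use a second vertex. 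The cleanest route: show that if $\alpha^e_i$ contains a vertex $w$ with $w_j \neq *$, then combined with the fixed-at-$i$ constraint, $w$ is fixed exactly on $\{i,j\}$, hence $w$ is one of at most a constant number of vectors; if instead \emph{every} vertex of $\alpha^e_i$ has a star in coordinate $j$, then each such vertex has its \emph{other} fixed coordinate outside $\{i,j\}$, and adjacency to the elements of $\beta^e_j$ forces a rigid matching-type structure. Pushing this case analysis, one finds that either one side of $A^e_i$ (or of $A^e_j$) is a single vertex — making it a star — or all four sides have size exactly $2$ with the fixed coordinates confined to $\{i,j\}$, which is precisely the $K_{2,2}$ case (the four vectors being $x, y$ and the two "opposite-corner" vectors fixed on $\{i,j\}$).

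I expect the main obstacle to be the bookkeeping in the sub-case where vertices of a restriction side have their second fixed coordinate \emph{outside} $\{i,j\}$: here one has to chase mutual-adjacency constraints between $A^e_i$ and $A^e_j$ simultaneously and rule out "large" configurations, showing they collapse to a star on one side. The guiding principle throughout is the scarcity of fixed coordinates (only two per vertex), which makes "large biclique with vertices fixed off $\{i,j\}$" impossible while keeping everyone mutually $G_m$-adjacent, so every escape from the $K_{2,2}$ configuration costs a side a vertex and produces a star. Once the four sides are pinned down, the trichotomy "$A^e_i$ a star, or $A^e_j$ a star, or both $K_{2,2}$" follows by inspection.
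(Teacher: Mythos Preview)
Your overall approach matches the paper's: normalise coordinates so that the endpoints of $e$ are $(0,0,*,\ldots,*)\in\alpha$ and $(1,1,*,\ldots,*)\in\beta$, then chase adjacency constraints using the fact that every vertex has only two fixed coordinates. That is exactly what the paper does, and your reduction ``if any of $\alpha^e_i,\beta^e_i,\alpha^e_j,\beta^e_j$ is a singleton we are done'' is the right first move.

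However, your expectation of what the ``both $K_{2,2}$'' case looks like is wrong, and this would derail your case analysis. You predict that the $K_{2,2}\times K_{2,2}$ outcome arises when all four vertices are the ``corner'' vectors fixed on $\{i,j\}$ (i.e.\ $x,y$ together with $(0,1,*,\ldots)$ and $(1,0,*,\ldots)$). In fact that configuration forces one of the restrictions to be a \emph{star}: for instance, with $\alpha=\{(0,0,\ldots),(0,1,\ldots)\}$ and $\beta=\{(1,0,\ldots),(1,1,\ldots)\}$ one gets $A^e_i=A\cong K_{2,2}$ but $A^e_j$ reduces to the single edge $e$. The paper handles this by first showing that if either corner vector $(0,1,\ldots)$ or $(1,0,\ldots)$ lies in $A$ then some side collapses to a singleton, and only \emph{then} turns to the remaining case.

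The genuine ``both $K_{2,2}$'' case is precisely the one you flagged as the hard sub-case and expected to collapse to a star: it requires the extra vertices to have their second fixed coordinate \emph{outside} $\{i,j\}$. A concrete example is $\alpha=\{(0,0,*),(0,*,0),(*,0,0)\}$, $\beta=\{(1,1,*),(1,*,1),(*,1,1)\}$, where $A\cong K_{3,3}$ and both $A^e_1$ and $A^e_2$ are $K_{2,2}$. So your guiding principle ``every escape from the $K_{2,2}$ configuration costs a side a vertex and produces a star'' is inverted. With that corrected, the bookkeeping goes through as in the paper: once the corner vectors are excluded, each additional vertex in $\alpha^e_1,\beta^e_2,\alpha^e_2,\beta^e_1$ is forced into a specific form involving a third coordinate, and one checks that the two restrictions are each $K_{2,2}$.
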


This lemma, though slightly technical, is key to our approach. A more intuitive formulation is that all bicliques containing an edge shared by two covering bicliques are either  `close' to being contained in one of these covering bicliques or `very small'. 

\begin{proof}
By permuting the coordinates and swapping 0 with 1 in the first two coordinates, we may assume that, the endpoints of $e$ are $(0,0,*, \dots ,*)$ and $(1,1,*, \dots,*)$, with the former being in $\alpha$ and the latter in $\beta$.

Notice that if any of $\alpha^e_1,$  $\alpha^e_2,$ $\beta^e_1$ or  $\beta^e_2$ are singletons, we are done, so we assume the contrary. Suppose at least one of $(0,1,*, \dots ,*)$ and $(1,0,*,\dots, *) $ is in $A$---without loss of generality $(0,1,*, \dots, *)$ is in $\alpha$. Then every vertex in $\beta$ must have a 1 in the first coordinate. But any vertex in $\beta_2^e$ has 1 in the second coordinate. Thus there is only one vertex in $\beta_2^e$, contradicting our earlier assumption.

Therefore neither  $(0,1,*, \dots, *)$ nor $(1,0,*, \dots, *)$ is in $A$. A second vertex in $\alpha^e_1$ must have a 0 in the first coordinate, a $*$ in the second coordinate and without loss of generality, a 0 in the third coordinate. Equally, a second vertex in $\beta_2^e$ must be $(*,1,1,*,\dots, *)$ in order to be adjacent to the vertices we have assumed are in $\alpha$. 

A further vertex in $\alpha_2^e$  must start with $*$, followed by 0 and have a further non-asterisk digit, in the $i^{th}$ place, say, ($i>2$). Our previous assumptions determine that there is only one further vertex in $A$---the vector with 1's in the first and $i^{th}$ coordinates. It can be seen that $A_1$ and $A_2$ are both $K_{2,2}$.

\end{proof}

We now define for each $i$, a colouring $c_i$ of the shared edges of $B_i$. If $e$ is a shared edge of $B_i$ and $B_j$ and $A$ is the partitioning biclique containing it, $c_i(e)$ is red if $A^e_i$ is a star or a $K_{2,2}$ and blue otherwise. Note that the preceding lemma shows that if $c_i(e)$ is blue then $A_j^e$ is a star.

A shared edge is blue in at most one colouring. Hence, each of the $2\binom{m}{2}=m(m-1)$ shared edges is red in at least one of the $m$ colourings, implying that at least one of the $m$ covering bicliques, $B_1$ say, must contain at least $m-1$ shared edges coloured red.

All shared edges are vertex-disjoint---as seen by the form of the edges as vectors---so we may label the edges as $u^1 v^1, \dots, u^{m-1} v^{m-1}$, where the $u^i$ and the $v^j$ are from different classes of $B_1$. We seek a large $B\subseteq B_1$ such that all shared edges in $B$ are in partitioning bicliques that restrict to stars on $B_1$. Pick $I \subseteq [m-1]$ as follows. Let $1\in I$. If the shared edge $e=u^1 v^1 \in A \in \pi$ and $A^e_1 \cong K_{2,2}$ then discard from $I$ the (at most two)  indices other than 1 for which one of $u^i$ and $v^i$ is a vertex of this $K_{2,2}$. Then proceed to the next index not already discarded, and continue in the same way. Since at each stage, we throw away at most two indices, the set of surviving indices, $I$, satisfies $|I| \geq (m-1)/3.$

Let $B$ be the induced subgraph of $B_1$ with the $u^i$ and $v^j$ as vertices, for $i \in I,$ $j \in I$.  Let $H$ be the subgraph of $B$ formed by removing all shared edges, $u^i v^i$ for $i\in I$. Clearly, $H$ is isomorphic to $H_{|I|}$, the crown graph on $2|I|$ vertices. Let $A$ be a biclique containing one of these removed edges, $e$, and note that $A\cap B$ is the union of at most two disjoint bicliques. By the construction of $B$, the component of $A \cap B$ containing $e$, $A_1^e\cap B$, is a star. We label the other (possibly empty) biclique component $A'$. From $\pi$, we induce a partition $\pi'$ of $H$ as follows. If $A \in \pi$ contains an edge of $H$, then place $A\cap H$ inside  $\pi'$. Now, $A \cap H=A\cap B$ and is hence a union of bicliques, unless $A$ contains a removed edge. Using $G-e$ to denote the graph $G$ with the edge $e$ deleted, in this case, $A \cap H=A'\cup (A^e_1 \cap B-e)$, the latter component remaining a star. Note that if $\pi$ is a $k$-local partition of $G_m$, then $\pi'$ is a $k$-local partition of $H$. This argument is valid for all partitions $\pi$ of $G$, so $\mathrm{lbp}(H)\leq \mathrm{lbp}(G_m)$.

We now complete the proof in the following lemma:

\begin{lemma}
$\mathrm{lbc}(H_t)\geq \frac{1}{2} \lceil \log t \rceil$.
\end{lemma}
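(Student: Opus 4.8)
The plan is to establish a lower bound on $\mathrm{lbc}(H_t)$ by exploiting the near-isomorphism between $H_t$ and the complete graph $K_t$. Recall that $H_t = K_{t,t}$ minus a perfect matching, with classes $\{u^1, \dots, u^t\}$ and $\{v^1, \dots, v^t\}$, and $u^i v^j \in E(H_t)$ iff $i \neq j$. The key observation is that the map $i \mapsto \{u^i, v^i\}$ lets us pull any biclique cover of $H_t$ back to a biclique cover of $K_t$: if $(\alpha, \beta)$ is a biclique of $H_t$, then every pair $\{i, j\}$ with $i \neq j$ such that $u^i \in \alpha$ and $v^j \in \beta$ (or vice versa) gives an edge $ij$ of $K_t$. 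Given a cover of $H_t$ by bicliques $C_1, \dots, C_s$, I would define for each $C_\ell = (\alpha_\ell, \beta_\ell)$ a bipartition of $[t]$ (or a pair of such) by letting $S_\ell = \{ i : u^i \in \alpha_\ell \text{ or } v^i \in \alpha_\ell \}$ and $T_\ell = \{ i : u^i \in \beta_\ell \text{ or } v^i \in \beta_\ell\}$; since $C_\ell$ is bipartite and hence $\alpha_\ell, \beta_\ell$ share no vertex, and since $u^i \not\sim v^i$, one checks $S_\ell$ and $T_\ell$ are disjoint subsets of $[t]$, and $(S_\ell, T_\ell)$ is a biclique of $K_t$.

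Next I would verify that these bicliques $(S_\ell, T_\ell)$ cover $K_t$: given any $i \neq j$, the edge $u^i v^j$ of $H_t$ lies in some $C_\ell$, so $u^i \in \alpha_\ell, v^j \in \beta_\ell$ (WLOG), whence $i \in S_\ell$, $j \in T_\ell$, so $ij$ is covered. Thus $\mathrm{bc}(K_t) \leq s$ for any cover of $H_t$ of size $s$, but this only bounds $\mathrm{bc}$, giving $s \geq \lceil \log t \rceil$ — a bound on the ordinary cover number, not the local one. To get the local statement I instead track locality: if the original cover of $H_t$ is $r$-local, then index $i \in [t]$ appears (via $u^i$ or $v^i$) in at most $2r$ of the sets $S_\ell \cup T_\ell$, since $u^i$ is in at most $r$ of the $C_\ell$ and $v^i$ in at most $r$ of the $C_\ell$. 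Hence the derived cover of $K_t$ is $2r$-local, so $\mathrm{lbc}(K_t) \leq 2r$.

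Finally I would invoke the known value $\mathrm{lbc}(K_t) = \lceil \log t \rceil$ (stated in the introduction, following Katona–Szemerédi and Dong–Liu), which gives $\lceil \log t \rceil \leq 2r = 2\,\mathrm{lbc}(H_t)$, i.e. $\mathrm{lbc}(H_t) \geq \frac{1}{2}\lceil \log t \rceil$, as required. I would then note how this closes the proof of Theorem \ref{main}: combining $\mathrm{lbp}(G_m) \geq \mathrm{lbp}(H_{|I|}) \geq \mathrm{lbc}(H_{|I|}) \geq \frac{1}{2}\lceil \log |I| \rceil \geq \frac{1}{2}\log \frac{m-1}{3}$, using $|I| \geq (m-1)/3$.

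The main obstacle I anticipate is checking carefully that $(S_\ell, T_\ell)$ is genuinely a valid biclique of $K_t$ — in particular that $S_\ell \cap T_\ell = \emptyset$. This requires that no index $i$ has $u^i$ (or $v^i$) in $\alpha_\ell$ while simultaneously $v^i$ (or $u^i$) is in $\beta_\ell$; but that would force $u^i \sim v^i$ in $H_t$, which is false, so it is fine — though one must also handle the degenerate case where, say, $u^i \in \alpha_\ell$ and $v^i \in \alpha_\ell$ both hold, which is harmless since then $i \in S_\ell$ only. A secondary subtlety is that a single biclique of $H_t$ could in principle be "split" when passing to $K_t$ (if both $u^i, v^i$ fall on the same side), but this never increases locality or breaks the covering property, so the argument goes through; I would state this cleanly rather than belabor the case analysis.
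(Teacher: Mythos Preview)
Your proposal is correct and follows essentially the same approach as the paper: identify the pair $u^i,v^i$ to a single vertex $i$, so that any biclique cover of $H_t$ induces a biclique cover of $K_t$ in which each vertex $i$ lies in at most twice as many bicliques as the worse of $u^i,v^i$; then invoke $\mathrm{lbc}(K_t)=\lceil\log t\rceil$. Your write-up is simply more explicit about the construction and the disjointness check $S_\ell\cap T_\ell=\emptyset$, but the underlying argument is identical.
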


\begin{proof}

A biclique cover of $H_t$ induces a biclique cover of $K_t$ when two corresponding vertices (i.e. $u^i$ and $v^i$ for each $i$) are identified as one. As mentioned earlier, $\mathrm{lbc}(K_t)=\lceil \log t \rceil$ so, there must be some vertex of the $K_t$ in at least $\lceil \log t \rceil$ bicliques belonging to the induced cover. This implies one of the vertices of $H_t$ that are identified to it is in at least  $\frac{1}{2} \lceil \log t \rceil$ bicliques, in the original covering.

\end{proof}

\end{proof}


\section{Bounding $\mathrm{lbp}(G)$, for $\mathrm{lbc}(G)=2$ } \label{boundinglbp}

The previous section showed that even for $\mathrm{lbc}(G)=2$, we cannot place an absolute bound on $\mathrm{lbp}(G)$. In this section, we  instead aim for bounds involving additional covering information about $G$. One way of doing this is by looking at the number of bicliques in a 2-local cover of $G$:

\begin{theorem} \label{upperbound}

If $G$ has a 2-local $m$-cover then $\mathrm{lbp}(G)\leq 2 \lceil \log (m-1) \rceil+2$.

\end{theorem}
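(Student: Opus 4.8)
The plan is to prove the converse construction to the main theorem: given a 2-local $m$-cover $\kappa = \{B_1, \dots, B_m\}$ of $G$, I want to exhibit an $r$-local \emph{partition} with $r \leq 2\lceil \log(m-1)\rceil + 2$. The starting observation is that since $\kappa$ is 2-local, every edge of $G$ lies in one or two of the $B_i$; call an edge \emph{private} if it lies in exactly one $B_i$ and \emph{shared} if it lies in exactly two. The private edges are easy: partition each $B_i$ into (at most two) bicliques consisting of its private edges, and note each vertex lies in at most two such pieces. The real work is to handle the shared edges cheaply. For each pair $i<j$, the shared edges of $B_i \cap B_j$ form a bipartite-like structure (a biclique minus nothing, really a $K_{a,b}$), and since $\kappa$ is 2-local each vertex participates in at most two covering bicliques, so the shared edges incident to a fixed vertex $v$ all live in the single pair $\{B_i, B_j\}$ with $v \in B_i \cap B_j$. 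This disjointness is exactly what makes the $G_m$ example of the previous section the extremal case — I expect to essentially run its reverse-engineering.

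The key step is to efficiently partition a single biclique $K_{a,b}$ (the shared-edge graph of one pair $B_i \cap B_j$, or rather the relevant substructure after removing already-accounted edges) into few bicliques in a way that is globally low-local across all pairs simultaneously. Here I would use the standard fact that $K_{a,a}$ has a $\lceil \log a \rceil$-local biclique partition: index one side by binary strings of length $\lceil \log a\rceil$, and for each coordinate $k$ and each bit $b$ take the biclique between $\{$left vertices with $k$-th bit $=b\}$ and $\{$right vertices with $k$-th bit $=1-b\}$ — or the even cleaner Graham–Pollak-style recursive halving — so that each vertex sees $\lceil \log a \rceil$ of these pieces. Applying this to the shared edges of each pair contributes $\lceil \log(m-1)\rceil$ to the local count at each vertex \emph{on each of the (at most) two covering bicliques it belongs to}, because a vertex in $B_i \cap B_j$ has at most $m-1$ partners on the $B_i$-side structure and at most $m-1$ on the $B_j$-side. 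Adding the two private-edge pieces, the total local count at any vertex is at most $2\lceil\log(m-1)\rceil + 2$, which is the claimed bound. I also need to double-check that removing the shared edges from $B_i$ leaves the private part still a (disjoint union of) biclique(s) so the two bookkeeping parts genuinely partition $E(G)$.

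The main obstacle I anticipate is the simultaneous-locality bookkeeping: a vertex $v$ of $G$ typically lies in two covering bicliques $B_i$ and $B_j$, and within $B_i$ it is adjacent (via shared edges) to up to $m-1$ vertices each of which forces $v$ into $\log(m-1)$ pieces — but I must make sure I am not double-counting and that the "$m-1$ partners" bound is correct, i.e. that the shared neighbours of $v$ inside a fixed $B_i$ are controlled by how many \emph{other} $B_j$'s meet $B_i$ at $v$, which is at most $m-1$. I would organize this by fixing, for each vertex $v$, its (at most two) covering bicliques, and for each of those separately applying the logarithmic-local partition of the bipartite structure of shared edges, then summing. The secondary subtlety is making the partition of each $K_{a,b}$ with $a \neq b$ behave well — one pads to the nearest power of two or just uses the recursive construction on the larger side — but this only costs the ceiling already present in the statement. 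Everything else is the routine verification that the union of all these bicliques is edge-disjoint and covers $E(G)$.
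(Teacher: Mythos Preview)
Your proposal has a genuine gap: the claim that the private edges of each $B_i$ form ``(at most two) bicliques'' is false, and this is not a minor bookkeeping point but the heart of the matter. In the extremal graph $G_m$ (the one you yourself point to), the private edges of $B_1$ are exactly $B_1$ with a perfect matching removed---a crown graph, not a biclique---and this is precisely the structure that requires the logarithmic-local partition. You have the roles of ``easy'' and ``hard'' reversed. The shared edges between a fixed pair $B_i,B_j$ do form a union of at most two bicliques, but in the extremal case each such biclique is a single edge, so partitioning them is trivial and contributes nothing like $\lceil\log(m-1)\rceil$. More generally, for an arbitrary 2-local $m$-cover there is no bound on $|V(B_i)\cap V(B_j)|$ in terms of $m$, so your claimed contribution of $\lceil\log(m-1)\rceil$ from the shared-edge biclique of a single pair has no justification; your explanation that ``a vertex in $B_i\cap B_j$ has at most $m-1$ partners'' is not correct---since the cover is 2-local, all shared edges at $v$ come from the \emph{single} other biclique containing $v$, and there can be arbitrarily many of them.

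The paper's route is close in spirit but organises the decomposition differently. First it reduces to the universal graph on $\{0,1,*\}^m$ with at most two non-$*$ coordinates, so that the structure is completely explicit. Then, rather than separating private from shared edges, it splits the shared edges: for each pair $i<j$ there are exactly two shared edges, and one is assigned to $C_i$ and the other to $C_j$. Each $C_i$ is then $B_i$ minus a matching of size $m-1$ (one removed edge per other index $j$), hence essentially the crown graph $H_{m-1}$, and $\mathrm{lbp}(C_i)\le \lceil\log(m-1)\rceil+1$. Since every vertex lies in at most two of the $C_i$, this gives the bound $2\lceil\log(m-1)\rceil+2$. The quantity $m-1$ enters as the number of \emph{other covering bicliques} that $B_i$ overlaps, i.e.\ the size of the removed matching, not as the number of shared neighbours of a single vertex.
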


This upper bound is best possible up to a constant factor by Theorem \ref{main}. We proceed by using an argument similar to the start of the proof of Theorem \ref{bpvsbc} to reduce proving the upper bound for all graphs with a 2-local $m$-cover to proving it for one particular graph. This `worst' graph will turn out to be similar to the graph used in the proof of Theorem \ref{main}.

\begin{proof}

Fix a 2-local $m$-cover of $G$. We may identify vertices $v \in V(G)$ with vectors $\tilde{v} \in \{0,1,*\}^m$ with exactly $m-2$ $*'$s, similar to the proof of Theorem \ref{bpvsbc}. As shown there, we may assume no two vertices have the same representation, and that all such vectors are the representation of some vertex, i.e. we assume that          
\[V(G)\subseteq\{v \in \{0,1,*\}^m|\,  v \text{ has at least $m-2$ $*$'s} \}. \]
Note that there is an edge between two vertices $u$ and $v$ if and only if there is some $i$ for which $\{ {u_i}, {v}_i\}=\{0,1\}$.

For $i=1, \dots, m$, let $B_i$ be the induced subgraph on the vertices whose $i^{th}$ coordinate is not $*$. We define shared edges in an identical manner to in Section 4 and we shall define $C_i$, a subgraph of $B_i$, such that every edge of $G$ is an edge in exactly one $C_i$.

Let $C_i$ contain all non-shared edges of $B_i$. Additionally, for all $i$ and $j$, let $C_i$ contain exactly one of the  edges shared between $B_i$ and $B_j$, and let $C_j$ contain the other. Thus any vertex is in at most two $C_i$, and the $C_i$ are isomorphic, so $\mathrm{lbp}(G)\leq 2 \mathrm{lbp}(C_1)$. But $C_i$ is a complete bipartite graph minus a matching of size ${m-1}$. It is easy to show that $\mathrm{lbp}(C_1) \leq  \mathrm{lbp}(H_{m-1})+1,$ where $H_{m-1}$ is the crown graph on $2(m-1)$ vertices.

We now show that $\mathrm{lbp}(H_t)\leq \lceil \log t \rceil$, for any $t$. We name the classes of $H_t$ $U$ and $V$. We assign to  each vertex a unique label consisting of its class name and a binary vector of length $\lceil\log t \rceil$. For each position, $i=1, \dots, \lceil\log t \rceil$, we generate two bicliques corresponding to the difference in the $i^{th}$ position of the binary vector. More specifically, we let $B_i^1$ be the induced biclique formed by all the vertices of class $U$ with a 0 in the $i^{th}$ coordinate and all vertices of class $V$ with a 1 in the $i^{th}$ coordinate. Similarly, we let $B_i^2$ be the induced biclique formed by all vertices of class $U$ with a 1 in the $i^{th}$ coordinate and all vertices of class $V$ with a 0 in the $i^{th}$ coordinate. The collection of all these bicliques is a $\lceil\log t \rceil$-local partition of $H_t$. This concludes the proof.


\end{proof}

Since $\mathrm{bc}(G)$ is a measure of independent interest to local measures, one may try to use the separate existence of a $m$-cover in a graph with $\mathrm{lbc}(G)=2$, rather than the existence of a 2-local $m$-cover, to bound $\mathrm{lbp}(G)$. More formally, we ask:


\begin{question}
What is the smallest $k(m)$ such that if the graph $G$ has $\mathrm{bc}(G)=m$ and $\mathrm{lbc}(G)=2$, we have $\mathrm{lbp}(G)\leq k$?  
\end{question}

Theorem \ref{main} tells us that $k(m) \geq \frac{1+o(1)}{2} \log m$. However, an upper bound does not follow directly from Theorem \ref{upperbound} as the cover that shows $\mathrm{lbc}(G)=2$ may be genuinely different from the cover that shows $\mathrm{bc}(G)=m$. The following result shows this:

\begin{theorem}

For all $m \geq 4$, there is a bipartite graph, $G$, with $\mathrm{lbp}(G)=\mathrm{lbc}(G)=2$ and $\mathrm{bp}(G)=\mathrm{bc}(G)=m$, but there are no $(m-1)$-local $m$-covers. 

\end{theorem}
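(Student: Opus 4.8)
The plan is to construct $G$ explicitly as a bipartite graph that is ``rigid'' enough that its optimal biclique cover is essentially unique, yet that optimal cover fails to be highly local. A natural candidate is the bipartite complement of a perfect matching together with a bit of extra structure — i.e. something built from the crown graph $H_t$ of Section 4, since we already understand its covers. Concretely, I would take $t$ so that $\lceil \log t\rceil = m$ (say $t = 2^{m}$, or $t$ slightly smaller so the logarithm is exactly $m$) and let $G = H_t$, the crown graph on $2t$ vertices, which is $K_{t,t}$ minus a perfect matching. First I would verify the easy facts: $\mathrm{bc}(H_t) = \lceil\log t\rceil = m$ via the standard binary-labelling cover (assign each matched pair $(u^i,v^i)$ a distinct binary string of length $m$, and for coordinate $i$ form the biclique between $\{u^i : \text{bit } i \text{ is } 0\}$ and $\{v^j : \text{bit } i \text{ is } 1\}$ — wait, one must be careful to cover all non-matching edges and no matching edges; the construction in the proof of Theorem~\ref{upperbound} does exactly this and uses $2m$ bicliques but is $m$-local, so I would instead use the non-local $m$-biclique cover of $K_t$ lifted to $H_t$). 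Then $\mathrm{lbc}(H_t) \le \mathrm{bc}(H_t) = m$ is too weak; instead I would use the $\lceil\log t\rceil$-local partition of $H_t$ exhibited in the proof of Theorem~\ref{upperbound} — but that gives $\mathrm{lbp}(H_t)\le m$, not $2$. So the crown graph as stated does not have $\mathrm{lbp}(G)=2$, and I need a different or augmented construction.

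The right construction should decouple the two roles: a small gadget forcing $\mathrm{lbc}=\mathrm{lbp}=2$ on a cheap local cover, glued disjointly (in the edge sense) to a gadget forcing $\mathrm{bc}=\mathrm{bp}=m$ via a cover that is necessarily non-local. I would therefore take $G$ to be a graph on the same vertex set whose edge set is a union $E_1 \cup E_2$, where $(V,E_1)$ is a bipartite graph admitting a $2$-local $2$-partition (e.g. $K_{2,2}$-type blocks, or a disjoint union of $4$-cycles on the relevant vertex pairs) and $(V,E_2)$ carries a ``Graham--Pollak-like'' or crown-like structure forcing rank $\ge 2m$ (hence $\mathrm{bp}\ge m$) and also forcing $\mathrm{bc}\ge m$, while the interaction between $E_1$ and $E_2$ is arranged so that in any $m$-cover realizing $\mathrm{bc}(G)=m$, some vertex must lie in all $m$ bicliques. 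The cleanest way to force the last point is a counting/pigeonhole argument: show every biclique cover of $G$ with $m$ bicliques must, when restricted to the $E_2$-part, look like the binary-labelling cover of some $K_m$-like object up to relabelling, and in that cover the ``all-ones'' and ``all-zeros'' label classes force a vertex into every biclique. I expect the cleanest choice is to let the $E_2$-gadget be precisely the complement of the subcube intersection graph on suitably chosen vectors, mirroring $G_m$ from Section~4 but with the extra feature that it is bipartite; the hypothesis $m\ge 4$ is presumably exactly what is needed for the rigidity argument (for $m=2,3$ there is too much freedom).

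The key steps, in order, would be: (1) define $G$ explicitly and check it is bipartite; (2) exhibit a $2$-local $2$-partition, giving $\mathrm{lbp}(G)\le 2$ and hence $\mathrm{lbc}(G)=\mathrm{lbp}(G)=2$ (the lower bound $\ge 2$ being trivial since $G$ has a vertex of degree $\ge 2$ with non-adjacent neighbours, or simply is not a disjoint union of stars); (3) exhibit an $m$-biclique partition, giving $\mathrm{bp}(G)\le m$ and hence $\mathrm{bc}(G)\le m$; (4) prove $\mathrm{bc}(G)\ge m$, either via an eigenvalue/rank bound ($\mathrm{rank}(A(G))\ge 2^m$ or similar forcing $\mathrm{bc}\ge m$ through the fact that a $d$-cover realizes $G$ as a subcube intersection graph on $2^d$ realizable neighbourhoods, so $n$ distinct closed neighbourhoods with enough structure force $d\ge \log n$) — actually the standard bound $\mathrm{bc}(G)\ge \log(\text{number of distinct rows of the adjacency/adjacency-complement matrix})$ should suffice if $G$ has $2^m$ vertices with distinct neighbourhoods; (5) the crux: show that no $m$-cover of $G$ can be $(m-1)$-local. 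For step (5) I would argue by contradiction: an $m$-cover that is $(m-1)$-local assigns to each vertex a vector in $\{0,1,*\}^m$ with at least one $*$; count how many such vectors there are and how many distinct neighbourhoods they can realize, and show this count is strictly less than $|V(G)| = 2^m$ (or less than the number of distinct neighbourhoods of $G$), the inequality becoming valid precisely for $m\ge 4$. The main obstacle will be step (5)'s counting being tight: one must choose $|V(G)|$ and the neighbourhood structure so that $m$ fully-free-at-some-coordinate vectors genuinely cannot separate all the vertices, which likely forces $G$ to have close to, but not more than, $3^m - 3^{m-1}\cdot(\text{something})$ vertices — so calibrating the construction against the exact count of $(m-1)$-local-realizable graphs is the delicate part, and checking the threshold $m\ge4$ is where the real work lies.
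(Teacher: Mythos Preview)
Your proposal never settles on a concrete graph, and the outlined strategy for the crucial step (5) cannot work as stated. An $(m-1)$-local $m$-cover assigns each vertex a vector in $\{0,1,*\}^m$ with at least one $*$; there are $3^m-2^m$ such vectors, which already exceeds $2^m$ for every $m\ge 2$, so a graph with $2^m$ vertices and pairwise distinct neighbourhoods is not ruled out by your count. (Relatedly, the ``distinct neighbourhoods'' lower bound on $\mathrm{bc}$ gives only $\mathrm{bc}(G)\ge \log_3 n$, not $\log_2 n$, since the encoding vectors live in $\{0,1,*\}^d$; to force $\mathrm{bc}\ge m$ this way you would need on the order of $3^m$ vertices, not $2^m$.) A pure counting argument of this flavour seems very unlikely to yield the sharp conclusion that \emph{no} $(m-1)$-local $m$-cover exists; you need a structural rigidity statement about optimal covers, not a cardinality obstruction.

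The paper's approach is entirely different and much more direct. It builds a small explicit bipartite graph on only $m+\binom{m}{2}+1$ vertices: one side $X=\{x_1,\dots,x_m\}$, the other side containing one vertex $y_{\{i,j\}}$ for each pair $i\ne j$ (adjacent to $x_i$ and $x_j$) together with a single vertex $y_{[m]}$ adjacent to all of $X$. A $2$-local partition is immediate (the star at $y_{[m]}$ plus the stars at each $x_i$ to their remaining neighbours), giving $\mathrm{lbp}=\mathrm{lbc}=2$. The key idea you are missing is the notion of \emph{strongly independent edges}: vertex-disjoint edges that do not lie in a common $K_{2,2}$, hence can never share a biclique. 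The edges $x_1y_{\{1,2\}},\,x_2y_{\{2,3\}},\,\dots,\,x_my_{\{m,1\}}$ are pairwise strongly independent, forcing $\mathrm{bc}(G)\ge m$ (and stars at the $x_i$ give equality, also for $\mathrm{bp}$). Moreover, swapping one edge of this family for another incident to the same $x_i$ shows that in any $m$-cover the biclique containing $x_i$ must be a star centred at $x_i$. Consequently $y_{[m]}$ lies in all $m$ bicliques of every $m$-cover, so none is $(m-1)$-local.
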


\begin{proof}

First, we make a general observation about biclique covers. Two edges, $ \{v_1, v_2\}$ and $\{u_1, u_2\} \in E(G)$ are called \emph{strongly independent} if they are independent and the minimum degree of $G[u_1, u_2, v_1, v_2]$ is 1---in other words, they are vertex disjoint and do not lie in the same $K_{2,2}$ subgraph. No biclique can contain both edges so if $E(G)$ contains a set of $k$ pairwise strongly independent edges, $\mathrm{bc}(G) \geq k$.


Let $X=\{x_1, \dots, x_m\},$  $Y=\{y_{i,j}: 1\leq i< j\leq m\}\cup \{y_{[m]}\}$  and let $V(G)=X\cup Y$. Let there be an edge from $x_i$ to $y_{i,j}$ and to $y_{j,i}$ for all appropriate $j$. Also, let $y_{[m]}$ have edges to all of the $x_i$. This defines a bipartite graph, $G$.  Let the partition $\pi$ consist of a $K_{1,m}$ with its centre at $y_{[m]}$, and stars at each of the $x_i$ each with leaves at all adjacent vertices other than $y_{[m]}$. Thus $\mathrm{lbp}(G)=\mathrm{lbc}(G)=2$. Note that $I=\{ \{x_1, y_{1,2}\}, \{x_2, y_{2,3}\}, \dots, \{x_m, y_{m,1}\}\}$ is a set of strongly independent edges. Therefore $\mathrm{bc}(G)\geq m$, and so $\mathrm{bp}(G)=\mathrm{bc}(G)=m$ as we can cover (indeed partition) with stars that have their centre in $X$. 

Suppose $\kappa$ is a biclique cover of $G$ using just $m$ bicliques. Each of these contains exactly one of the strongly independent edges listed above.

We can see that $I\cup \{x_1, y_{1,3}\} \setminus \{x_1, y_{1,2}\}$ is also a strongly independent set of $m$ edges. Therefore, $x_1$ must be at the centre of a star in $\kappa$. Similarly, every $x_i$ must be the centre of a star in $\kappa$, and these $m$ stars account for all the bicliques in $\kappa$. So the star with centre $x_i$, must contain all vertices adjacent to $x_i$. Thus $y_{[m]}$ is in $m$ bicliques of $\kappa$.

\end{proof}

So we can see that insisting that a cover of a graph $G$ attains $\mathrm{bc}(G)$ may ensure the cover is not close to being optimal for $\mathrm{lbc}$. 
This leads to the following question---if we insist that the cover of a graph attains $\mathrm{lbc}(G)$, could this ensure that it is far from being optimal for $\mathrm{bc}$? More formally:

\begin{question} Suppose $\mathrm{bc}(G)=m$, $\mathrm{lbc}(G)=k$, what is the smallest $r=r(m,k)$ such that we can guarantee $G$ has a $k$-local $r$-cover?

\end{question}

An answer to this for $k=2$, combined with Theorem \ref{upperbound}, would lead to a partial solution of Question 1.

The following example of the hypercube is instructive, in giving bounds on the problem---in particular, it shows $r(2^{d-1}, d/2) \geq d2^{d-3}$, for $d$ even.

\begin{example}
Consider $Q_d$, for $d$ even. It is easy to show that $\mathrm{bp}(Q_d)=\mathrm{bc}(Q_d)= 2^{d-1}$. Indeed, each biclique contains at most $d$ edges, and $e(Q_d)=d2^{d-1}$, so $\mathrm{bc}(Q_d)\geq 2^{d-1}$. On the other hand, we may partition using $|Q_d|/2=2^{d-1}$ stars since $Q_d$ is bipartite. Dong and Liu \cite{dongliu} showed that $\mathrm{lbp}(Q_d)=\mathrm{lbc}(Q_d)=d/2$. In fact, their proof showed that a cover achieves this only if it consists solely of ${K_{2,2}}$'s so every $d/2$-local cover contains at least $d2^{d-3}$ bicliques.

\end{example}

\subsection*{Acknowledgements}

I would like to thank Robert Johnson for useful advice and many helpful discussions. I would also like to thank the anonymous referee for many helpful comments that helped improve the presentation of this paper. This research was funded by an EPSRC doctoral studentship.





\begin{thebibliography}{99}





















\bibitem{alon}
N. Alon
\emph{Neighborly families of boxes and bipartite coverings}
Algorithms and Combinatorics
\textbf{14} (1997) 27--31.


\bibitem{chung}
F.R.K. Chung
\emph{On the coverings of graphs}
Discrete Mathematics
\textbf{30} 89--93.

\bibitem{chungetal}
F.R.K. Chung, P. Erd\H{o}s and J. Spencer
\emph{On the decomposition of graphs into complete bipartite subgraphs}
Studies in Pure Mathematics, Mem. of P. Tur\'an
 (1983) 95--101.
 
\bibitem{dongliu}
J. Dong and Y. Liu,
\emph{On the decomposition of graphs into complete bipartite graphs},
Graphs and Combinatorics,
\textbf{23} (2007) no. 3, 255–262.

\bibitem{erdos}
P. Erdős
\emph{Graph theory and probability}
Canadian Journal of Mathematics
\textbf{11} (1959) 34--38.


\bibitem{erdospyber}
  P. Erd\H{o}s and L. Pyber,
  \emph{Covering a graph by complete bipartite subgraphs},
  Discrete Mathematics,
  \textbf{170} (1997) 249--251.

\bibitem{fishburn}
P.C. Fishburn and P.L. Hammer
\emph{Bipartite dimensions and bipartite degrees of graphs},
Discrete Mathematics,
\textbf{160} (1996) Issues 1--3 127--148.

\bibitem{grahampollak}
R.L. Graham and H.O. Pollak,
\emph{On the addressing problem for loop switching,}
Bell System Technical Journal,
\textbf{50} (1971) 2495–2519.

\bibitem{johnsonmarkstrom}
 J.R. Johnson and K. Markstr\"om,
 \emph{Tur\'an and Ramsey Properties of Subcube Intersection Graphs},
 Combinatorics, Probability and Computing
 \textbf{22} (2013) 55--70.
 
 \bibitem{jukna}
S. Jukna
\emph{On Set Intersection Representations of Graphs}
Journal of Graph Theory
\textbf{61}(1) (2009) 55--75 .

\bibitem{juknakulikov}
S. Jukna and A. S. Kulikov
\emph{On covering graphs by complete bipartite subgraphs}
Discrete Mathematics
\textbf{309}(10) (2009) 3399--3403

\bibitem{katonaszem}
G. Katona and E. Szemer\'edi,
\emph{On a problem of graph theory},
Studia Scientiarum Mathematicarum Hungarica,
\textbf{2} (1967) 23--28.

\bibitem{comcom}
E. Kushilevitz and N. Nisan
\emph{Communication Complexity}
Cambridge University Press (1997).

\bibitem{lupanov}
O.B. Lupanov
\emph{On rectifier and switching-and-rectifier schemes},
Doklady Akademii Nauk SSSR
\textbf{111} (1956) 1171--1174 (in Russian).

\bibitem{mckee}
T.A. McKee and F.R. McMorris
\emph{Topics in Intersection Graph Theory} 
SIAM Monographs on Discrete Mathematics and Applications
(1999).

\bibitem{mubayi}
D. Mubayi and S. Vishwanathan
\emph{Bipartite Coverings and the Chromatic Number}
Electronic Journal of Combinatorics
\textbf{16} (2009) N34.

\bibitem{nau}
D.S. Nau, G. Markowski, M. A. Woodbury and D. B. Amos
\emph{A Mathematical Analysis of Human Leukocyte Antigen Serology}
Mathematical Biosciences
\textbf{40} (1978) 243--270.

\bibitem{rodl}
  V. R\"odl and A. Ruci\'nski,
  \emph{Bipartite Coverings of Graphs},
  Combinatorics, Probability and Computing ,
  \textbf{6} (1997) 349--352.
  
\bibitem{tuza}
  H. Tuza,
  \emph{Covering of graphs by complete bipartite subgraphs; Complexity of 0-1 matrices},
  Combinatorica, 
  \textbf{4} (1984) 111--116.
  
\bibitem{tverberg}
 H. Tverberg,
 \emph{On the Decomposition of $K_n$  into Complete Bipartite Graphs},
 Journal of Graph Theory,
 \textbf{6} (1982) 493--494.
 

  
\end{thebibliography}
\end{document}